\documentclass[10pt,a4paper]{article}
\usepackage[margin=2cm]{geometry}
\usepackage{amsmath,amsthm,amsfonts,amssymb,amscd,cite,mathrsfs,graphicx,url}
\usepackage{latexsym}
\usepackage{enumitem}
\usepackage{thmtools}
\usepackage{mathabx}\changenotsign
\usepackage{dsfont}
\setlength{\arrayrulewidth}{0.2pt}

\usepackage[abbrev,msc-links]{amsrefs}

\usepackage[usenames, dvipsnames]{color}
\definecolor{mygray}{gray}{0.7}

\usepackage[table]{xcolor}
\usepackage{wasysym,upgreek}
\usepackage{mathdots}
\usepackage[enableskew]{youngtab}
\usepackage{tikz}
\usetikzlibrary{tikzmark,decorations.pathreplacing}
\usetikzlibrary{arrows,matrix}
\usetikzlibrary{positioning}
\usetikzlibrary{decorations}
\usepackage[all,cmtip]{xy}

\usepackage{titlesec}
\titleformat{\section}
{\normalfont\fontsize{16}{20}\bfseries}{\thesection}{1em.}{}
\definecolor{amber}{rgb}{1.0,0.75,0.0}
\definecolor{applegreen}{rgb}{0.55,0.71,0.0}
\definecolor{byzantium}{rgb}{0.44, 0.16, 0.39}
\definecolor{cadmiumorange}{rgb}{0.93, 0.53, 0.18}
\definecolor{darkcyan}{rgb}{0.0, 0.55, 0.55}

\newtheorem{lemma}{Lemma}[section]

\newtheorem{theorem}{Theorem}[section]
\newtheorem{remark}{Remark}[section]

\newtheorem{cor}{Corollary}[section]

\newtheorem{prop}{Proposition}[section]

\newtheorem{claim}{Claim}[section]
\newtheorem{prob}{Problem}[section]


\let\oldbibliography\thebibliography
\renewcommand{\thebibliography}[1]{%
  \oldbibliography{#1}%
  \setlength{\itemsep}{-2pt}%
}
\baselineskip=1.5in



\let\E=\EE
\def\rm{\text{$\mathbb{RM}$}}
\def\PP{{\mathds P}}

\newcommand{\cM}{\mathcal{M}}
\def\rd{\text{$\mathbb{RD}$}}
\def\rw{\text{$\mathbb{RW}$}}

\AtBeginDocument{%
   \def\MR#1{}
}

\begin{document}
\baselineskip=0.20in

\makebox[\textwidth]{%
\hglue-15pt
\begin{minipage}{0.6cm}	
\vskip9pt
\end{minipage} \vspace{-\parskip}
\hfill
\begin{minipage}[t]{5.4cm}
\normalsize{} 
\end{minipage}}
\vskip36pt

\begin{center}
{\large \bf Largest bipartite sub-matchings of a random ordered matching or a problem with socks}\\[10pt]

Andrzej Dudek$^\dag$, Jaros\l aw Grytczuk$^\ddag$ and Andrzej Ruci\'nski$^\S$\\[20pt]

\footnotesize {\it $^\dag$Department of Mathematics, Western Michigan University, Kalamazoo, MI, USA\\
Email: andrzej.dudek@wmich.edu}\\[10pt]

\footnotesize {\it $^\ddag$Faculty of Mathematics and Information Science, Warsaw University of Technology, Warsaw, Poland\\
Email: jaroslaw.grytczuk@pw.edu.pl}\\[10pt]

\footnotesize {\it $^\S$Department of Discrete Mathematics, Adam Mickiewicz University, Pozna\'n, Poland\\
Email: rucinski@amu.edu.pl}\\[20pt]


\end{center}

\baselineskip=0.30in

\normalsize

\noindent
{\sc Abstract:} Let $M$ be an \emph{ordered matching} of size $n$, that is, a partition of the set $[2n]$ into 2-element subsets. The \emph{sock number} of $M$ is the maximum size of a sub-matching of $M$ in which all left-ends of the edges  precede all the right-ends (such matchings are also called bipartite). The name of this parameter comes from an amusing ``real-life'' problem posed by Bosek, concerning an \emph{on-line} pairing of randomly picked socks from a drying machine.
	Answering one of Bosek's questions we prove that the sock number of a random matching of size $n$ is asymptotically equal to $n/2$. Moreover, we prove that the expected average number of socks waiting for their match during the whole process is equal to $\frac{2n+1}{6}$. Analogous results are obtained if socks come not in pairs, but in sets of size $r\geq 2$, which corresponds to a similar problem for random ordered $r$-matchings. We also attempt to enumerate matchings with a given sock number.
\bigskip

\noindent{\bf Keywords}: ordered matchings; random matchings; combinatorial enumeration\\

\noindent{\bf 2020 Mathematics Subject Classification}: 05C80; 05C30; 05C70

\section{Introduction}

Bart\l{}omiej Bosek \cite{Bosek} has asked the following ``practical'' questions. There is a laundry sack with $n$ different pairs of (clean) socks.
You pull randomly socks from the sack, one by one, and place them on the floor, unless the sock in your hand matches one already on the floor in which case you bundle the pair up and put it into a drawer. What is the largest number of socks on the floor at any given time? What is the average number of socks on the floor throughout the whole process? Of course, these are questions about random variables in a suitable probability space.

The linear order in which the socks are pulled from the sack can be identified with one of $(2n)!/2^n$ permutations with $n$ pairwise repetitions. However, as it really does not matter which socks are pulled and when, but only whether they form a pair with one already on the floor, we can truncate the space down to just all $(2n)!/(2^nn!)$ (ordered) $n$-element matchings of the set $[2n]=\{1,2,\dots, 2n\}$.

For instance,  matching $\{1,5\}, \{2,3\},\{4,6\}$ corresponds to the order of socks in which the first and the fifth socks pulled are matched and so are the second and the third, as well as the fourth and the sixth. Here the situation on the floor goes through the following six stages: $1, 12,1$ (because the second and the third sock are put together in the drawer), $14, 4, \emptyset$. So, in this case the answers to the two above questions are $2$ and $7/6$, respectively. If, instead, we are looking at matching $\{1,5\}, \{2,4\},\{3,6\}$, the answers are $3$ and $3/2$.

There is a convenient way to represent ordered matchings in terms of \emph{Gauss words}, i.e., words in which every letter of an $n$-element alphabet appears exactly twice and  words obtained by permuting their letters are identified (e.g., $ABBA=BAAB$).  Clearly, we get such a word from an ordered matching if the ends of each edge are replaced with a pair of identical letters, different form all other pairs of letters.
For example, the two instances above can be encoded as $ABBCAC$ and $ABCBAC$, respectively.

Formally, let $M$ be a matching on $[2n]$. For each $k\in[2n]$, let $x_k:=x_k(M)$ be the number of edges of $M$ with one endpoint in $[k]$ and the other in $[2n]\setminus[k]$. Note that $x_k$ represents exactly the number of socks on the floor after $k$ socks have been pulled from the sack. For that reason we call the sequence $(x_k)_{k=1}^{2n}$ the \emph{sockuence} of $M$ (the term coined in by Martin Milani\v c).

Let $\rm(n)$ be a random matching on $[2n]$, that is, a matching  picked uniformly at random out of the set of all
$\frac{(2n)!}{(2)^n\, n!}$ matchings on the set $[2n]$. Thus, defining random variables $X_k=x_k(\rm(n))$,  the two questions of Bosek ask about distributions of the random variables $Y=\max_kX_k$ -- the largest value of the sockuence of $\rm(n)$ and $\bar X=\sum_kX_k/(2n)$ -- the average value of the sockuence, respectively.
In this paper we determine both random variables asymptotically, the latter being a simpler task.

\begin{theorem}\label{thm1:r=2}
We have $\E\bar X=\frac{2n+1}{6}$ and, for every $\omega(n)\to\infty$, a.a.s.
\[
| \bar X- n/3|\le \omega(n)\sqrt n.
\]
\end{theorem}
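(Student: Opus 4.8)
The plan is to work not with the individual variables $X_k$ but with their sum. Writing $S=\sum_{k=1}^{2n}X_k$, so that $\bar X=S/(2n)$, and swapping the order of summation gives the clean identity
\[
S=\sum_{k=1}^{2n}X_k=\sum_{\{a,b\}\in \rm(n),\,a<b}(b-a),
\]
since an edge $\{a,b\}$ with $a<b$ is counted by $X_k$ exactly for the $b-a$ indices $k\in\{a,\dots,b-1\}$. Thus $S$ is just the total \emph{span} of the random matching. For a fixed pair $\{a,b\}$ one computes directly from the number of matchings that $\PP(\{a,b\}\in\rm(n))=1/(2n-1)$, so by linearity of expectation
\[
\E S=\frac{1}{2n-1}\sum_{1\le a<b\le 2n}(b-a)=\frac{1}{2n-1}\sum_{d=1}^{2n-1}d\,(2n-d),
\]
and a routine evaluation of the last sum yields $\E S=\frac{n(2n+1)}{3}$, hence $\E\bar X=\E S/(2n)=\frac{2n+1}{6}$, which is the first assertion.

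For the concentration statement I would use Chebyshev's inequality, so the task reduces to showing $\operatorname{Var}(\bar X)=O(n)$, equivalently $\operatorname{Var}(S)=O(n^3)$. Writing $I_e=\mathbf 1[e\in\rm(n)]$ and $\ell(e)=b-a$ for a pair $e=\{a,b\}$ with $a<b$, we have $S=\sum_e\ell(e)I_e$ and
\[
\operatorname{Var}(S)=\sum_{e,f}\ell(e)\ell(f)\operatorname{Cov}(I_e,I_f),
\]
the sum ranging over ordered pairs $(e,f)$ of (not necessarily distinct) $2$-subsets of $[2n]$. I would split this according to how $e$ and $f$ meet. Counting matchings gives $\PP(e,f\in\rm(n))=\frac{1}{(2n-1)(2n-3)}$ when $e,f$ are disjoint and $0$ when they share a vertex; together with $\PP(e\in\rm(n))=\frac1{2n-1}$ this makes every covariance explicit.

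The diagonal terms contribute $\frac{2n-2}{(2n-1)^2}\sum_e\ell(e)^2$, and since $\sum_e\ell(e)^2=\sum_{d=1}^{2n-1}d^2(2n-d)=\Theta(n^4)$ this is $\Theta(n^3)$. The shared-vertex terms have \emph{negative} covariance $-1/(2n-1)^2$, so they may simply be discarded for an upper bound. Finally, the disjoint pairs carry covariance $\frac{2}{(2n-1)^2(2n-3)}=O(n^{-3})$, and the crude bound $\sum_{e,f\ \mathrm{disjoint}}\ell(e)\ell(f)\le\bigl(\sum_e\ell(e)\bigr)^2=\Theta(n^6)$ shows their total contribution is also $O(n^3)$. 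Hence $\operatorname{Var}(S)=O(n^3)$ and $\operatorname{Var}(\bar X)=O(n)$.

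To conclude, since $\E\bar X=\frac{2n+1}{6}=\frac n3+\frac16$ and $\omega(n)\sqrt n\to\infty$, Chebyshev's inequality gives
\[
\PP\bigl(|\bar X-n/3|>\omega(n)\sqrt n\bigr)\le \PP\bigl(|\bar X-\E\bar X|>\tfrac12\omega(n)\sqrt n\bigr)\le\frac{4\operatorname{Var}(\bar X)}{\omega(n)^2 n}=O\bigl(\omega(n)^{-2}\bigr)\to 0,
\]
which is the second assertion. The only genuine work is the variance estimate, and the point that keeps it painless is that the negatively correlated shared-vertex terms can be dropped, so that the two crude upper bounds above suffice and no delicate cancellation is required.
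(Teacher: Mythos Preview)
Your proof is correct, but it takes a genuinely different route from the paper's.

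For the expectation, the paper computes each $\E(X_k)$ separately via Claim~2.1 (namely $\E(X_k)=k-\binom{k}{2}/\binom{2n-1}{1}$ in the case $r=2$) and then sums over $k$. You instead use the edge-length identity $S=\sum_{e}\ell(e)$ --- an identity the paper records only as an afterthought in deriving Corollary~1.5 --- and compute $\E S$ directly by linearity over pairs. Both routes are short; yours is arguably the more natural way to reach the expectation.

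The real divergence is in the concentration step. The paper models $\rm(n)$ via a random permutation of $[2n]$, observes that $\bar X$ is $2$-Lipschitz under transpositions, and invokes the Azuma--Hoeffding inequality for random permutations (Lemma~2.1) to get an exponential tail bound. You instead estimate $\operatorname{Var}(S)$ by splitting the covariance $\operatorname{Cov}(I_e,I_f)$ according to $|e\cap f|$ and apply Chebyshev. Your variance calculation is sound: the key observation that the shared-vertex covariances are negative (since two overlapping pairs cannot both be edges of a matching) is exactly what lets you discard them and avoid any cancellation issues, and the remaining two cases are handled by the crude bounds you give.

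What each approach buys: yours is more elementary and entirely self-contained, requiring no black-box concentration inequality. The paper's Azuma--Hoeffding argument yields a stronger (sub-Gaussian) tail, which is not needed for Theorem~1.1 itself but is essential for Theorem~1.2, where one must union-bound over all $k$ and a Chebyshev-type estimate on each $X_k$ would be too weak. So the paper's choice is driven by reusability across both theorems, whereas your method is tailored to this one and is the cleaner proof of it in isolation.
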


As for the former task, note that the random variable $Y$ equals the size of the largest \emph{bipartite} sub-matching of $\rm(n)$, that is, a sub-matching consisting of edges whose all left-ends precede all right-ends. Equivalently, a bipartite matching is one with the interval chromatic number equal to 2 (see, e.g., \cite{FJKMV} for definition). In the word notation, a \emph{bipartite} ordered matching is just a Gauss word whose second half  is a permutation of the first half, as it happens, for instance, in  the word $ABCDBDAC$. There is yet another alternative definition of bipartite matchings in terms of forbidden sub-structures. Indeed, there are three types, or patterns, in which two edges may intertwine:  an \emph{alignment} $AABB$, a \emph{nesting} $ABBA$, or a \emph{crossing} $ABAB$.
Then a bipartite matching is precisely one with no alignments.

It has been known (see \cite{JSW} for alignments,  \cite{BaikRains} for nestings and crossings, see also \cite{DGR-match}) that for each of the three patterns, a.a.s.\  the largest size of  a sub-matching of $\rm(n)$  with all pairs of edges forming that pattern has size $\Theta(\sqrt n)$. It follows that a.a.s.\ $Y=\Omega(\sqrt n)$.
Here we show that, in fact, a.a.s.\ $Y$ is a linear function of $n$.

\begin{theorem}\label{thm2:r=2}
There exists a constant $C>0$ such that, a.a.s.,
\[
|Y-n/2|\le C\sqrt{n\log n}.
\]
\end{theorem}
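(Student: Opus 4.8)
The plan is to analyze each value of the sockuence separately and then take a union bound. Writing $X_k=x_k(\rm(n))$ for the number of edges of the random matching that cross the cut $([k],[2n]\setminus[k])$, we have $Y=\max_{1\le k\le 2n}X_k$. First I would pin down the expectation exactly. Since each vertex $i\in[k]$ is matched to a uniformly random element of the remaining $2n-1$ vertices, linearity of expectation gives
\[
\E X_k=\sum_{i=1}^{k}\Pr[\text{partner of }i\text{ lies in }[2n]\setminus[k]]=k\cdot\frac{2n-k}{2n-1},
\]
which is maximized at $k=n$, where $\E X_n=\frac{n^2}{2n-1}=\frac n2+O(1)$. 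Thus the ``right'' location of the maximum is the central cut $k=n$, and both the target value $n/2$ and the fact that $\E X_k\le n/2+O(1)$ for every $k$ are already visible at this stage.

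The heart of the argument is a sub-Gaussian concentration bound for each $X_k$ with the correct scaling. I would generate $\rm(n)$ from a uniformly random permutation $\pi$ of $[2n]$ by pairing consecutive entries, i.e.\ taking the edges $\{\pi(2i-1),\pi(2i)\}$ for $1\le i\le n$. For fixed $k$ the statistic $X_k$ is then a function of $\pi$, and transposing the values of $\pi$ at two positions alters the membership of at most two of the pairing blocks; since each block contributes either $0$ or $1$ to $X_k$, such a swap changes $X_k$ by at most $2$. The method of bounded differences for random permutations (a Fisher--Yates/Azuma argument) therefore yields a constant $c>0$ with
\[
\Pr\bigl[\,|X_k-\E X_k|\ge t\,\bigr]\le 2\exp\!\left(-\frac{c\,t^2}{n}\right)\qquad\text{for all }t\ge 0.
\]

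With this in hand both inequalities follow. For the upper bound, apply the tail estimate with $t=C\sqrt{n\log n}$ and union bound over the $2n$ cuts: the failure probability is at most $4n\exp(-cC^2\log n)=4n^{1-cC^2}$, which tends to $0$ once $C$ is large enough, so a.a.s.\ $X_k\le \E X_k+C\sqrt{n\log n}\le n/2+C'\sqrt{n\log n}$ simultaneously for all $k$, whence $Y\le n/2+C'\sqrt{n\log n}$. For the lower bound it suffices to use the single central cut: $Y\ge X_n$, and the same tail estimate gives $X_n\ge \E X_n-C\sqrt{n\log n}\ge n/2-C\sqrt{n\log n}$ a.a.s. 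Combining the two bounds and enlarging the constant proves the claim. The main obstacle is the concentration step: one must produce a bound that is genuinely sub-Gaussian with variance proxy $O(n)$ (not merely the $O(n)$ variance that Chebyshev would give), since only then does the $\sqrt{\log n}$ deviation survive the union bound over all $2n$ cuts. The permutation representation together with the swap-bounded-difference property is what makes this work, and verifying that a single transposition perturbs $X_k$ by at most a constant is the key technical point.
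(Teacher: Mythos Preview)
Your proposal is correct and follows essentially the same route as the paper: compute $\E X_k=k(2n-k)/(2n-1)$, observe it is maximized at $k=n$ with value $n/2+O(1)$, represent $\rm(n)$ via a uniform permutation so that a single transposition changes $X_k$ by at most $2$, apply the Azuma--Hoeffding inequality for random permutations to get a sub-Gaussian tail with variance proxy $O(n)$, and finish with a union bound over all cuts for the upper tail and the single cut $k=n$ for the lower tail. The paper does exactly this (in the general $r$-matching setting), the only cosmetic difference being that it separates off the extreme cuts $k\notin[n^{4/5},\,2n-n^{4/5}]$ deterministically before taking the union bound, which is unnecessary for $r=2$ and which your uniform union bound over all $2n$ cuts handles just as well.
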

\noindent (This has been proved in \cite[Theorem 4.7]{Scheinerman1988} with an unspecified term $o(n)$ instead of  our $\sqrt{n\log n}$.)

The problem can be generalized to $r$-matchings as follows. Imagine a distant planet inhabited by multi-leg creatures which for that reason wear not pairs but $r$-packs of socks, for some fixed $r\ge2$. Then the process of placing them in a drawer differs only in that socks are taken up from the floor when the last one from a pack is pulled from the sack. This corresponds to a random $r$-matching $\rm^{(r)}(n)$ drawn uniformly at random from all $(rn)!/(r!^nn!)$ $r$-matchings (i.e., partitions into $r$-element subsets) of the set $[rn]$. Again, $r$-matchings can be represented as $r$-fold Gauss words, where each letter appears exactly $r$ times and permuting the letters has no effect.

For instance, let $r=3$ and $n=12$ and look at the 4-edge 3-matching encoded by triples of distinct letters as $AABCDDDCBCBA$. Here the process goes as follows:
$$A,AA,AAB, AABC, AABCD, AABCDD, AABC, AABCC, AABCCB, AABB, AA,\emptyset,$$
so the maximum achieved (twice) is 6, while the average length is $42/12=7/2$.

Here again we may express the problem in terms of bipartite sub-matchings with edges, however, bearing weights corresponding to the number of vertices ``on the left''.
Formally, given an $r$-matching $M$ on $[rn]$, for each $k\in [rn]$, let $M_k$ be the set of all edges of $M$ with nonempty intersections with both $[k]$ and $[rn]\setminus[k]$. Further, let $x_k(M)=\sum_{e\in M_k}|e\cap[k]|$. Note that $x_k(M)$ represents exactly the number of socks on the floor after $k$ socks have been pulled from the sack. For instance, in the above example, $M_8$ consists of three edges, marked by letters $A,B$, and $C$, and $x_8(M)=2+1+2=5$.

Let $\rm^{(r)}(n)$ be a random $r$-matching on $[rn]$ and define random variables
$$X^{(r)}_k=x_k(\rm^{(r)}(n))\;,\quad\widebar{X^{(r)}}=\sum_kX^{(r)}_k/(rn)\;,\quad\mbox{and}\quad Y^{(r)}=\max_kX^{(r)}_k.$$
We generalize Theorems \ref{thm1:r=2} and \ref{thm2:r=2} to $r$-matchings as follows.
\begin{theorem}\label{thm1:general}
We have $\E\widebar{X^{(r)}}=\frac{(r-1)(rn+1)}{2(r+1)}$ and, for every $\omega(n)\to\infty$, a.a.s.
\[
\Big|\widebar{X^{(r)}}-\frac{(r-1)rn}{2(r+1)}\Big|\le \omega(n)\sqrt{n}.
\]
\end{theorem}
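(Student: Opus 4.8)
The plan is to compute $\E\widebar{X^{(r)}}$ exactly by linearity of expectation and then to derive the concentration from a second-moment (Chebyshev) estimate.

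First I would evaluate the mean termwise. Writing $X_k^{(r)}=\sum_{j\le k}\mathbf 1[\max(e_j)>k]$, where $e_j$ denotes the edge of $\rm^{(r)}(n)$ containing $j$, linearity gives $\E X_k^{(r)}=\sum_{j\le k}\Prob[\max(e_j)>k]$. For $j\le k$ the edge $e_j$ fails to reach past $k$ precisely when its other $r-1$ vertices all lie in $[k]\setminus\{j\}$, so $\Prob[\max(e_j)>k]=1-\binom{k-1}{r-1}/\binom{rn-1}{r-1}$ and hence $\E X_k^{(r)}=k\bigl(1-\binom{k-1}{r-1}/\binom{rn-1}{r-1}\bigr)$. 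Summing over $k$, using the identity $k\binom{k-1}{r-1}=r\binom{k}{r}$ and the hockey-stick identity $\sum_{k=1}^{rn}\binom{k}{r}=\binom{rn+1}{r+1}$, the correction term collapses to $r\binom{rn+1}{r+1}/\binom{rn-1}{r-1}=\frac{rn(rn+1)}{r+1}$. Together with $\sum_{k=1}^{rn}k=\frac{rn(rn+1)}{2}$ this gives $\E\sum_k X_k^{(r)}=\frac{rn(rn+1)}{2}-\frac{rn(rn+1)}{r+1}=\frac{(r-1)rn(rn+1)}{2(r+1)}$, and dividing by $rn$ produces the claimed $\E\widebar{X^{(r)}}=\frac{(r-1)(rn+1)}{2(r+1)}$.

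Since this mean differs from $\frac{(r-1)rn}{2(r+1)}$ only by a constant, it suffices to show that $\widebar{X^{(r)}}$ is within $\omega(n)\sqrt n$ of its own mean a.a.s., and by Chebyshev's inequality this follows once I prove $\mathrm{Var}\bigl(\sum_k X_k^{(r)}\bigr)=O(n^3)$, equivalently $\mathrm{Var}(\widebar{X^{(r)}})=O(n)$. For this I would use the identity $\sum_k X_k^{(r)}=\sum_{j=1}^{rn}(\max(e_j)-j)=r\sum_{m=1}^{rn} m\,I_m-\frac{rn(rn+1)}{2}$, where $I_m=\mathbf 1[m=\max(e_m)]$, which comes from counting for each vertex $j$ the number of time steps $\{k:j\le k<\max(e_j)\}$ it spends on the floor. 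The additive constant does not affect the variance, so $\mathrm{Var}\bigl(\sum_k X_k^{(r)}\bigr)=r^2\,\mathrm{Var}\bigl(\sum_m m\,I_m\bigr)$, and the diagonal part is harmless since $\sum_m m^2\mathrm{Var}(I_m)\le\sum_m m^2=O(n^3)$.

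The crux is thus to control the off-diagonal covariances $\mathrm{Cov}(I_m,I_{m'})$. I would estimate $\Prob[I_m=I_{m'}=1]$ for $m<m'$ directly: the two maxima lie in distinct edges, and one counts the ways of placing the $r-1$ smaller partners of each maximum among $[rn]$. In the base case $r=2$ this already gives $\mathrm{Cov}(I_m,I_{m'})=\frac{m-1}{2n-1}\cdot\frac{2(m'-2n)}{(2n-1)(2n-3)}\le0$, exhibiting negative correlation and, in particular, $|\mathrm{Cov}(I_m,I_{m'})|=O(1/n)$ uniformly; the same conclusion should persist for general $r$, since conditioning on $I_{m'}=1$ perturbs the relevant pool by only $O(1)$ vertices out of $rn$. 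Granting $|\mathrm{Cov}(I_m,I_{m'})|=O(1/n)$, I get $\sum_{m\ne m'} m m'\,|\mathrm{Cov}(I_m,I_{m'})|\le O(1/n)\bigl(\sum_m m\bigr)^2=O(n^3)$, whence $\mathrm{Var}(\widebar{X^{(r)}})=O(n)$ and Chebyshev closes the argument. I expect the covariance bookkeeping for general $r$ to be the only genuinely laborious step, the expectation and the reduction to a variance estimate being routine.
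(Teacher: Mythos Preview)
Your expectation computation is essentially identical to the paper's: both evaluate $\E X_k^{(r)}=k-r\binom{k}{r}/\binom{rn-1}{r-1}$ termwise and then sum via the hockey-stick identity.

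For the concentration, however, you take a genuinely different route. The paper represents the random $r$-matching as $M_\pi$ for a uniform permutation $\pi$ of $[rn]$, observes that swapping two values of $\pi$ changes $\widebar{X^{(r)}}$ by at most $2(r-1)$, and applies the Azuma--Hoeffding inequality for random permutations (McDiarmid's bounded differences); this gives exponential tails with essentially no calculation. Your second-moment argument is more hands-on but entirely elementary: the identity $\sum_k X_k^{(r)}=r\sum_m mI_m-\tfrac{rn(rn+1)}{2}$ is exactly the edge-length reformulation the paper records after the main theorems, and reducing to covariances of the max-indicators $I_m$ is a nice move. The ``should persist'' step does go through: for $m<m'$ one has $\PP[I_{m'}=1\mid I_m=1]=\binom{m'-1-r}{r-1}/\binom{rn-1-r}{r-1}$, and comparing this product of $r-1$ ratios with $\PP[I_{m'}=1]=\binom{m'-1}{r-1}/\binom{rn-1}{r-1}$ factor by factor gives a discrepancy of $O_r(1/n)$ uniformly, so $\mathrm{Var}(\widebar{X^{(r)}})=O(n)$ and Chebyshev finishes. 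The trade-off is that the paper's martingale approach is shorter and yields a sub-Gaussian tail (which it also reuses for $Y^{(r)}$ in the next theorem), whereas your approach avoids citing the permutation concentration lemma and stays within the first two moments.
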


\begin{theorem}\label{thm2:general}
For every $r\ge2$, there exists a constant $C>0$ such that, a.a.s.
\[
\Big|Y^{(r)}-\frac{(r-1)n}{r^{\frac1{r-1}}}\Big|\le C\sqrt{n\log n}.
\]
\end{theorem}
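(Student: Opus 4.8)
The plan is to analyze the sockuence $(X^{(r)}_k)_{k=1}^{rn}$ index by index and then combine pointwise concentration with a union bound. The starting observation is the clean identity
\[
X^{(r)}_k = k - r\,W_k,
\]
where $W_k$ denotes the number of edges of $\rm^{(r)}(n)$ lying entirely inside $[k]$: a vertex of $[k]$ is a sock ``on the floor'' precisely when its pack is not yet complete, and the $rW_k$ vertices belonging to complete packs are exactly those already removed. Since the probability that a fixed $r$-subset of $[k]$ is an edge equals $r!\,n/(rn)_r$ (with $(rn)_r$ the falling factorial), we get $\E W_k=\binom{k}{r}\,r!\,n/(rn)_r=(k)_r\,n/(rn)_r$, and hence, writing $\alpha=k/(rn)$,
\[
\E X^{(r)}_k=k-\frac{rn\,(k)_r}{(rn)_r}=rn\bigl(\alpha-\alpha^{r}\bigr)+O(1).
\]
Maximizing $\phi(\alpha)=\alpha-\alpha^{r}$ on $[0,1]$ gives $\phi'(\alpha)=1-r\alpha^{r-1}=0$, i.e.\ $\alpha^\ast=r^{-1/(r-1)}$, with value $rn\,\phi(\alpha^\ast)=(r-1)n\,r^{-1/(r-1)}$. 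Thus the target quantity is exactly $\max_k\E X^{(r)}_k$ up to an additive $O(1)$; the discretization $k^\ast=\lfloor\alpha^\ast rn\rfloor$ costs only $O(1)$ because $\phi$ is smooth near its interior maximum.

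It then remains to upgrade this first-moment statement to an almost-sure one, for which I would establish sub-Gaussian concentration of each $X^{(r)}_k$ with variance proxy $O(n)$. I would realize $\rm^{(r)}(n)$ as the block partition $\{\pi(1),\dots,\pi(r)\},\{\pi(r+1),\dots,\pi(2r)\},\dots$ induced by a uniformly random permutation $\pi$ of $[rn]$, so that $W_k$, and hence $X^{(r)}_k$, becomes a function of $\pi$. Swapping two values of $\pi$ moves at most two vertices between two packs, altering the crossing-status of at most $2r$ vertices and thus changing $X^{(r)}_k$ by at most $2r$. The bounded-differences inequality for functions of random permutations (equivalently, Azuma applied to the exposure martingale of the Fisher--Yates shuffle, which has $rn$ steps) then yields
\[
\Pr\bigl[\,|X^{(r)}_k-\E X^{(r)}_k|>t\,\bigr]\le 2\exp\!\left(-\frac{t^{2}}{c_r\,n}\right)
\]
for a constant $c_r$ depending only on $r$.

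Finally I would assemble the two bounds. For the lower bound, apply the lower tail at $k=k^\ast$ with $t=C\sqrt{n\log n}$ to obtain $Y^{(r)}\ge X^{(r)}_{k^\ast}\ge \E X^{(r)}_{k^\ast}-C\sqrt{n\log n}$ a.a.s. For the upper bound, apply the upper tail simultaneously to all $rn$ indices via a union bound: since $2rn\exp(-C^2\log n/c_r)=2rn^{\,1-C^2/c_r}=o(1)$ once $C$ is large, a.a.s.\ every $X^{(r)}_k\le \E X^{(r)}_k+C\sqrt{n\log n}\le\max_k\E X^{(r)}_k+C\sqrt{n\log n}$, whence $Y^{(r)}\le\frac{(r-1)n}{r^{1/(r-1)}}+C'\sqrt{n\log n}$. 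Together these give the claimed estimate. The computation of $\E X^{(r)}_k$ and its optimization are routine, so I expect the only genuinely delicate point to be the concentration step: verifying the bounded-difference constant carefully (so the variance proxy stays $O(n)$ rather than something larger) and confirming it is strong enough that the $O(n)$-fold union bound in the upper bound survives — which is exactly what forces the extra $\sqrt{\log n}$ factor in the error term.
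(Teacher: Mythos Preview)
Your proposal is correct and follows essentially the same route as the paper: compute $\E X^{(r)}_k$ (the paper does this directly in its Claim~\ref{claim:1}, whereas you go via the cleaner identity $X^{(r)}_k=k-rW_k$, but the resulting formula and the optimization over $k$ are identical), realize $\rm^{(r)}(n)$ through a uniform permutation, apply the Azuma--Hoeffding/bounded-differences inequality for permutations with a Lipschitz constant $O(r)$, and finish with a union bound over $k$, which is what produces the $\sqrt{\log n}$ factor. The only cosmetic differences are that the paper uses the slightly sharper Lipschitz constant $2(r-1)$ and restricts the union bound to a range $n^{4/5}\le k\le rn-n^{4/5}$ (handling the extreme $k$ deterministically), neither of which is essential.
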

\noindent In particular, for $r=3$, we infer that a.a.s.\ $\widebar{X^{(3)}}\sim\frac34n$ and $Y^{(3)}\sim\frac{2\sqrt{3}}{3}n$.

It is worth noting that Theorems \ref{thm1:r=2} and \ref{thm1:general} can be reformulated in terms of the distribution of the lengths of edges in a random matching. Indeed, given an edge $e=\{i,j\}$, where $1\le i<j\le 2n$, let $\ell(e):=j-i$ be the \emph{length} of $e$. Then, by a standard double counting argument, for every matching $M$ of $[2n]$,
$$\sum_{k=1}^{2n}x_k(M)=\sum_{e\in M}\ell(e).$$
Defining random variables $L:=\sum_{e\in\rm(n)}\ell(e)$ and $\bar L:=\frac Ln$, we have $\bar L=2\bar X$ and thus may restate Theorem \ref{thm1:r=2} as follows.

\begin{cor}\label{thm1:r=2,L}
We have $\E\bar L=\frac{2n+1}{3}$ and, for every $\omega(n)\to\infty$, a.a.s.
\[
| \bar L- 2n/3|\le \omega(n)\sqrt n.
\]
\end{cor}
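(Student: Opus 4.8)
The plan is to deduce everything from Theorem~\ref{thm1:r=2} through the single identity $\bar L = 2\bar X$, so that no new probabilistic work is required. First I would verify the double counting identity $\sum_{k=1}^{2n}x_k(M)=\sum_{e\in M}\ell(e)$ announced just above the statement: for an edge $e=\{i,j\}$ with $i<j$, the quantity $x_k(M)$ counts $e$ precisely for those $k$ with $i\le k<j$, i.e.\ for exactly $\ell(e)=j-i$ values of $k$; summing over all edges gives the identity. Applying it to $M=\rm(n)$ yields $L=\sum_{k=1}^{2n}X_k$, whence $\bar L=L/n=2\cdot\big(\sum_k X_k/(2n)\big)=2\bar X$.

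For the expectation, linearity gives $\E\bar L=2\,\E\bar X=2\cdot\frac{2n+1}{6}=\frac{2n+1}{3}$, as claimed. For the concentration, I would simply rescale the arbitrary rate function: given any $\omega(n)\to\infty$, set $\omega'(n)=\omega(n)/2$, which also tends to infinity, and apply the concentration half of Theorem~\ref{thm1:r=2} to $\omega'$. This gives, a.a.s., $|\bar X-n/3|\le\omega'(n)\sqrt n$, and multiplying through by $2$ and using $\bar L=2\bar X$ turns this into $|\bar L-2n/3|=2|\bar X-n/3|\le 2\omega'(n)\sqrt n=\omega(n)\sqrt n$, exactly the desired bound.

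Since both halves of the corollary are direct consequences of the corresponding halves of Theorem~\ref{thm1:r=2}, there is essentially no obstacle here beyond recording the identity $\bar L=2\bar X$ correctly; the only point requiring a moment's care is the factor of $2$ in the concentration statement, which is harmless precisely because $\omega(n)$ ranges over all functions tending to infinity and may therefore be replaced by $\omega(n)/2$ at no cost.
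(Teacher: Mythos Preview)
Your proof is correct and matches the paper's approach exactly: the paper simply records the identity $\bar L=2\bar X$ (via the same double counting) and declares the corollary a restatement of Theorem~\ref{thm1:r=2}, without even spelling out the $\omega(n)\mapsto\omega(n)/2$ step that you carefully include.
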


This connection can be generalized to arbitrary $r\ge2$, provided one comes up with a suitable definition of the length of an $r$-element subset of $[rn]$ reflecting the nature of the socks problem, namely that  the smaller vertices get higher weights (as they stay longer on the floor). To this end, given $e=\{i_1<i_2<\cdots<i_r\}\in \rm^{(r)}(n)$, let
$$\ell(e):=(i_2-i_1)+2(i_3-i_2)+\cdots+(r-1)(i_r-i_{r-1})=(i_r-i_1)+(i_r-i_2)+\cdots(i_r-i_{r-1}).$$
It is not difficult to see that, for every $r$-matching $M$ of $[rn]$,
$$\sum_{k=1}^{rn}x_k(M)=\sum_{e\in M}\ell(e).$$
Indeed, vertex $i_j\in e$, $1\le j\le r$, contributes 1 to the numbers $x_{i_j},x_{i_j+1},\dots, x_{i_r-1}$ and~0 to all others, so its contribution equals $i_r-i_j$ and thus the total contribution of $e$ toward $\sum_{k=1}^{rn}x_k(M)$ is precisely $\ell(e)$.

Defining random variables $L^{(r)}:=\sum_{e\in\rm^{(r)}(n)}\ell(e)$ and $\bar L^{(r)}:=\frac{L^{(r)}}n$, we have $\bar L^{(r)}=r\bar X^{(r)}$ and  thus may restate Theorem \ref{thm1:general} as follows.

\begin{cor}\label{thm1:general,L}
We have $\E\bar L^{(r)}=\frac{r(r-1)(rn+1)}{2(r+1)}$ and, for every $\omega(n)\to\infty$, a.a.s.
\[
\Big| \bar L^{(r)}- \frac{(r-1)r^2n}{2(r+1)}\Big|\le \omega(n)\sqrt n.
\]
\end{cor}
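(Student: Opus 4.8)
The plan is to deduce this corollary directly from Theorem \ref{thm1:general}, exploiting the scaling identity $\bar L^{(r)}=r\,\widebar{X^{(r)}}$ established in the paragraph preceding the statement. First I would recall why that identity holds as an equality of random variables, not merely in expectation: the double-counting argument shows $\sum_{k=1}^{rn}x_k(M)=\sum_{e\in M}\ell(e)$ for \emph{every} $r$-matching $M$, and dividing by $n$ together with the definition $\widebar{X^{(r)}}=\sum_k X^{(r)}_k/(rn)$ yields $L^{(r)}/n=r\,\widebar{X^{(r)}}$ pointwise on the probability space. This pointwise equality is exactly what lets both the mean and the concentration statements transfer verbatim.

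For the expectation, I would take expectations of $\bar L^{(r)}=r\,\widebar{X^{(r)}}$ and substitute $\E\widebar{X^{(r)}}=\frac{(r-1)(rn+1)}{2(r+1)}$ from Theorem \ref{thm1:general}, obtaining $\E\bar L^{(r)}=r\cdot\frac{(r-1)(rn+1)}{2(r+1)}=\frac{r(r-1)(rn+1)}{2(r+1)}$, as claimed. For the deviation bound, given an arbitrary $\omega(n)\to\infty$ I would apply the a.a.s.\ estimate of Theorem \ref{thm1:general} to the auxiliary function $\omega'(n):=\omega(n)/r$, which still tends to infinity for fixed $r$; this yields $\big|\widebar{X^{(r)}}-\frac{(r-1)rn}{2(r+1)}\big|\le\frac{\omega(n)}{r}\sqrt n$ a.a.s., and multiplying through by $r$ and using $\bar L^{(r)}=r\,\widebar{X^{(r)}}$ gives precisely $\big|\bar L^{(r)}-\frac{(r-1)r^2n}{2(r+1)}\big|\le\omega(n)\sqrt n$ a.a.s. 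There is no genuine obstacle here: the only point requiring a moment's care is the harmless rescaling of $\omega$ by the constant factor $r$, which is absorbed exactly because the claim is quantified over all functions tending to infinity.
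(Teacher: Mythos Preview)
Your proposal is correct and matches the paper's approach exactly: the corollary is presented in the paper as a direct restatement of Theorem~\ref{thm1:general} via the pointwise identity $\bar L^{(r)}=r\,\widebar{X^{(r)}}$, and your handling of both the expectation and the concentration (including the rescaling $\omega'=\omega/r$) is precisely what that restatement entails.
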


\section{Proofs}
In this section we prove Theorems~\ref{thm1:general} and~\ref{thm2:general}. Recall that $X^{(r)}_k=\sum_{e\in{\rm_k^{(r)}(n)}}|e\cap[k]|$, where  $\rm_k^{(r)}(n)$ the set of all edges of $\rm^{(r)}(n)$ with nonempty intersections with both $[k]$ and $[rn]\setminus[k]$. So we may assume that $1\le k\le rn-1$.

 Both proofs rely on high concentration of respective random variables around their means: $\widebar{X^{(r)}}$ for Theorem~\ref{thm1:general}, the $X^{(r)}_k$'s for Theorem~\ref{thm2:general}. To this end, we will use the Azuma-Hoeffding inequality for random permutations (see, e.g., Lemma 11 in~\cite{FP} or  Section 3.2 in~\cite{McDiarmid98}). This is feasible because ordered matchings can be produced by generating random permutations. Indeed, let $\pi$ be a permutation of $[rn]$. It can be chopped off into an $r$-matching $M_\pi:=\{\pi(1)\dots\pi(r), \pi(r+1)\dots\pi(2r),\dots,\pi(rn-r+1)\dots\pi(rn)\}$ and, clearly, there are exactly $(r!)^n n!$ permutations $\pi$ yielding the same matching. Thus, we can use the following lemma. By swapping two elements in a permutation $\pi_1$ we mean fixing two indices $i<j$ and creating a new permutation $\pi_2$ with $\pi_2(i)=\pi_1(j)$, $\pi_2(j)=\pi_1(i)$, and $\pi_2(\ell)=\pi_1(\ell)$ for all $\ell\neq i,j$. Let $\Pi_{N}$ denote a permutation selected uniformly at random from all $N!$ permutations of $[N]$.

\begin{lemma}[\cite{FP} or \cite{McDiarmid98}]\label{azuma}
 Let $h(\pi)$ be a function defined on the set of all permutations of order $N$ which satisfies the following Lipschitz-type condition: there exists a constant $c>0$ such that whenever a permutation $\pi_2$ is obtained from a permutation $\pi_1$ by swapping two elements, we have $|h(\pi_1)-h(\pi_2)|\le c$.
Then, for every $\eta>0$,
\[
\PP(|h(\Pi_{N})-\E[h(\Pi_{N})]|\ge \eta)\le 2\exp\{-2\eta^2/(c^2N)\}.
\]
\end{lemma}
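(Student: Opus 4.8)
The plan is to prove Lemma~\ref{azuma} by the method of bounded differences, combining a Doob martingale with an exchange (coupling) argument that converts the swap-Lipschitz hypothesis into control on the martingale increments. Write $N:=rn$ and expose the values $\Pi_{rn}(1),\Pi_{rn}(2),\dots,\Pi_{rn}(N)$ one position at a time, letting $\cF_i$ denote the $\sigma$-field generated by $\Pi_{rn}(1),\dots,\Pi_{rn}(i)$. Set
\[
Z_i:=\E\big[h(\Pi_{rn})\mid\cF_i\big],\qquad 0\le i\le N,
\]
so that $(Z_i)_{i=0}^N$ is a martingale with $Z_0=\E[h(\Pi_{rn})]$ and $Z_N=h(\Pi_{rn})$. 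The whole argument reduces to bounding the conditional range of the increments $Z_i-Z_{i-1}$ and then feeding this into Hoeffding's lemma.

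The key step is to show that, conditionally on $\cF_{i-1}$, the variable $Z_i$ lives in an interval of length at most $c$. Fix the first $i-1$ values of the permutation and let $a,b$ be two distinct admissible choices for position $i$. I would couple the two conditional ensembles as follows: take a uniformly random completion $\pi$ with $\pi(i)=a$; then the value $b$ sits in some position $j>i$ of $\pi$, and swapping positions $i$ and $j$ yields a permutation $\pi'$ with $\pi'(i)=b$ that agrees with $\pi$ on $[i-1]$. The map $\pi\mapsto\pi'$ is a measure-preserving bijection between the two conditional ensembles, and $\pi'$ arises from $\pi$ by a single swap, so the hypothesis gives $|h(\pi)-h(\pi')|\le c$. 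Averaging over the coupling shows that the conditional means $\E[h\mid\cF_{i-1},\Pi_{rn}(i)=a]$ and $\E[h\mid\cF_{i-1},\Pi_{rn}(i)=b]$ differ by at most $c$. Since $Z_{i-1}$ is the $\cF_{i-1}$-average of these conditional means, the increment $Z_i-Z_{i-1}$ has conditional mean zero and lies in an interval of width at most $c$.

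With the increments confined to intervals of width $c$, Hoeffding's lemma yields $\E\big[e^{\lambda(Z_i-Z_{i-1})}\mid\cF_{i-1}\big]\le e^{\lambda^2c^2/8}$ for every $\lambda$. Multiplying these estimates across $i=1,\dots,N$ via the tower property gives $\E\big[e^{\lambda(h(\Pi_{rn})-\E h(\Pi_{rn}))}\big]\le e^{N\lambda^2c^2/8}$. A Chernoff step followed by optimizing over $\lambda>0$ (the optimal choice being $\lambda=4\eta/(Nc^2)$) produces the one-sided bound $\exp\{-2\eta^2/(c^2N)\}$; applying the same estimate to $-h$ and summing the two tails gives the stated prefactor $2$. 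It is precisely the width-$c$ (rather than the naive $2c$) control of the increments, coming from the bijective coupling, that accounts for the constant $2$ in the exponent $-2\eta^2/(c^2rn)$ instead of the weaker $-\eta^2/(2c^2rn)$.

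The main obstacle is the coupling in the second step: one must check that the transposition of positions $i$ and $j$ really is a measure-preserving bijection between the two conditional distributions and that each coupled pair differs by a single swap, so that the Lipschitz hypothesis applies verbatim. The only subtlety is that the position $j$ of the value $b$ is itself random, but since the swap is a transposition of positions and the completions are uniform of equal cardinality in both branches, the pairing is a genuine bijection of uniform ensembles, which is exactly what is needed; the remainder is the routine Azuma--Hoeffding bookkeeping.
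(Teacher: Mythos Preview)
The paper does not prove this lemma at all: it is quoted as a known result from \cite{FP} and \cite{McDiarmid98}, and the authors simply invoke it as a black box. So there is no ``paper's own proof'' to compare against.

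That said, your sketch is a correct reconstruction of the standard argument (essentially McDiarmid's proof). The Doob martingale on the filtration $\cF_i$ generated by $\Pi_{rn}(1),\dots,\Pi_{rn}(i)$ is the right object, and the crucial point---that the conditional range of each increment is at most $c$ rather than $2c$---is exactly what your coupling delivers. The bijection you describe (swap position $i$ with the position $j$ holding the value $b$) is indeed a measure-preserving bijection between the two conditional uniform ensembles, and each coupled pair differs by a single transposition, so the Lipschitz hypothesis applies directly. The remaining Hoeffding-lemma and Chernoff steps are routine and your constants are correct: width $c$ gives $e^{\lambda^2 c^2/8}$ per step, hence $e^{N\lambda^2 c^2/8}$ in total, and optimizing yields the exponent $-2\eta^2/(c^2 N)$ as stated. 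Nothing is missing.
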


With respect to the Lipschitz condition the following observation is crucial for us.
Let $\pi_1$ be a permutation of $[rn]$ and permutation $\pi_2$ be obtained from $\pi_1$ by swapping two elements. This way we can destroy (or create) at most two edges which may jointly contribute at most $2(r-1)$ to $X^{(r)}_k$. Hence,   for each $k$ the random variable $X^{(r)}_k$ satisfies the Lipschitz condition with $c=2(r-1)$ and so does $\widebar{X^{(r)}}$.

Equipped with the above concentration tool, the next thing we need is a formula for $\E(X^{(r)}_k)$.

\begin{claim}\label{claim:1}
For all $1\le k\le rn-1$,
\[
\E(X^{(r)}_k) = k - \frac{r\binom{k}{r}}{\binom{rn-1}{r-1}}.
\]
\end{claim}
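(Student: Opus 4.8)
The plan is to replace the weighted crossing-edge sum defining $X^{(r)}_k$ by a much simpler quantity, exploiting the fact that the total weight over \emph{all} edges is fixed. Observe that every vertex of $[k]$ lies in exactly one edge of $M:=\rm^{(r)}(n)$, so $\sum_{e\in M}|e\cap[k]|=k$. Now partition the edges of $M$ according to how they meet $[k]$: an edge disjoint from $[k]$ contributes $0$ to this sum, an edge contained in $[k]$ contributes $r$, and every remaining edge is precisely a crossing edge, i.e.\ a member of $M_k$, contributing $|e\cap[k]|$. Writing $N_k$ for the number of edges of $M$ entirely contained in $[k]$, this gives the key identity
\[
X^{(r)}_k=\sum_{e\in M_k}|e\cap[k]|=k-rN_k.
\]

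Next I would take expectations and use linearity. Since $N_k=\sum_S \mathds 1[S\in M]$, where $S$ ranges over all $r$-element subsets of $[k]$, we get $\E(N_k)=\binom{k}{r}\,p$, where $p:=\PP(S\in\rm^{(r)}(n))$ is the probability that a fixed $r$-set is an edge (the same for every $r$-set, by symmetry). Thus $\E(X^{(r)}_k)=k-r\binom{k}{r}p$, and it only remains to evaluate $p$.

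To compute $p$, fix any vertex $v\in S$. By the symmetry of the uniform random $r$-matching, the set of the $r-1$ partners of $v$ is distributed uniformly among all $\binom{rn-1}{r-1}$ choices of an $(r-1)$-subset of $[rn]\setminus\{v\}$. Since $S$ is an edge precisely when $S\setminus\{v\}$ coincides with this partner set, we obtain $p=1/\binom{rn-1}{r-1}$; alternatively, the same value follows from directly counting the $(rn-r)!/((r!)^{n-1}(n-1)!)$ matchings containing $S$ as an edge against the total of $(rn)!/((r!)^n n!)$ matchings, and simplifying. Substituting this into the previous formula yields $\E(X^{(r)}_k)=k-r\binom{k}{r}/\binom{rn-1}{r-1}$, as claimed. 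There is no genuine obstacle here: the only points requiring care are the bookkeeping in the identity $X^{(r)}_k=k-rN_k$ (correctly accounting for edges disjoint from $[k]$ and edges inside $[k]$) and the symmetry justification used to evaluate $p$, both of which are elementary.
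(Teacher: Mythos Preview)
Your proof is correct and in fact slightly slicker than the paper's. The paper computes $\E(X^{(r)}_k)$ by summing, over $j=1,\dots,r-1$, the expected contribution of edges with exactly $j$ vertices in $[k]$, obtaining
\[
\E(X^{(r)}_k)=\frac{1}{\binom{rn-1}{r-1}}\sum_{j=1}^{r-1}j\binom{k}{j}\binom{rn-k}{r-j},
\]
and then simplifies via $j\binom{k}{j}=k\binom{k-1}{j-1}$ and Vandermonde's identity. You instead establish the deterministic identity $X^{(r)}_k=k-rN_k$ (with $N_k$ the number of edges contained in $[k]$), which reduces the computation to a single expectation $\E(N_k)=\binom{k}{r}\big/\binom{rn-1}{r-1}$ and avoids the binomial-sum manipulation altogether. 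Both arguments rely on the same basic fact that a fixed $r$-set is an edge with probability $1/\binom{rn-1}{r-1}$; your complementary-counting step is what saves the Vandermonde work.
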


\begin{proof}[Proof of Claim~\ref{claim:1}]
In order to find the expected value of $X^{(r)}_k$,  fix an integer $1\le j\le r-1$ and observe that there are $\binom{k}{j}\binom{rn-k}{r-j}$  possible edges having exactly $j$ vertices in $[k]$. Since a fixed edge appears in a randomly chosen matching with probability $1/\binom{rn-1}{r-1}$, we get
\[
\E(X^{(r)}_k) = \sum_{j=1}^{r-1} \frac{j\binom{k}{j}\binom{rn-k}{r-j}}{\binom{rn-1}{r-1}}=\frac{k\sum_{j=0}^{r-1}\binom{k-1}{j}\binom{rn-k}{r-1-j}-k\binom{k-1}{r-1}}{\binom{rn-1}{r-1}}=k - \frac{r\binom{k}{r}}{\binom{rn-1}{r-1}}.
\]
\end{proof}

\begin{proof}[Proof of Theorem~\ref{thm1:general}]
By Claim \ref{claim:1} and the linearity of expectation
\begin{align*}
\E(\widebar{X^{(r)}}) &= \frac{1}{rn} \sum_{k=1}^{rn-1} \E(X^{(r)}_k)
= \frac{1}{rn} \sum_{k=1}^{rn-1} \left( k - \frac{r\binom{k}{r}}{\binom{rn-1}{r-1}} \right)
= \frac{rn-1}{2} - \frac{1}{n \binom{rn-1}{r-1}} \sum_{k=1}^{rn-1} \binom{k}{r}\\
&= \frac{rn-1}{2} - \frac{1}{n \binom{rn-1}{r-1}} \binom{rn}{r+1}
= \frac{rn-1}{2} - \frac{rn-r}{r+1}
= \frac{(r-1)(rn+1)}{2(r+1)}.
\end{align*}

 Recall that   $\widebar{X^{(r)}}$ satisfies the Lipschitz condition with $c=2(r-1)$.  Hence,  given $\omega(n)\to\infty$, Lemma~\ref{azuma} applied with $N=rn$, $h(\pi)=\sum_{k=1}^{rn}x_k(M_\pi)/(rn)$,   $c=2(r-1)$, and $\eta = \tfrac12\omega(n)\sqrt n$ yields (for large $n$)
 \begin{align*}
 \PP\left(\Big|\widebar{X^{(r)}}-\frac{(r-1)rn}{2(r+1)}\Big|\ge\omega(n)\sqrt n\right)&\le\PP\left(\Big|\widebar{X^{(r)}}-\E\widebar{X^{(r)}}\Big|\ge\frac12\omega(n)\sqrt n\right)\\&\le2\exp\left\{-\frac{\omega^2(n)}{2c^2r}\right\}=o(1).
 \end{align*}
\end{proof}
\begin{proof}[Proof of Theorem~\ref{thm2:general}] This proof is more tricky, as we do not have an exact formula for $\E Y$, where recall $Y=\max_kX^{(r)}_k.$ Instead, we apply Lemma~\ref{azuma} to each $X^{(r)}_k$ individually for a broad range of $k$.
First observe that $\E(X^{(r)}_k)$, already computed in Claim~\ref{claim:1}, can be alternatively expressed as
\begin{align*}
\E(X^{(r)}_k) &= k - \frac{r\binom{k}{r}}{\binom{rn-1}{r-1}}
= k - \frac{k\binom{k-1}{r-1}}{\binom{rn-1}{r-1}}
= k\left(1 - \frac{\binom{k-1}{r-1}}{\binom{rn-1}{r-1}} \right)\\
&= k\left(1 - \frac{k-1}{rn-1}\cdot \frac{k-2}{rn-2}\cdot \ldots \cdot \frac{k-r+1}{rn-r+1} \right)
\end{align*}
and thus
\begin{equation}\label{bounds}
k\left(1 - \left(\frac{k}{rn}\right)^{r-1} \right)
\le \E(X^{(r)}_k)
\le k\left(1 - \left(\frac{k-r}{rn-r}\right)^{r-1} \right).
\end{equation}
Also note that
\begin{align*}
\left(\frac{k-r}{rn-r}\right)^{r-1}
&\ge \left(\frac{k-r}{rn}\right)^{r-1}
= \left(\frac{k}{rn}-\frac{1}{n}\right)^{r-1}
= \left(\frac{k}{rn}\right)^{r-1} - O\left(\frac{k^{r-2}}{n^{r-1}}\right)
= \left(\frac{k}{rn}\right)^{r-1} - O\left(\frac{1}{k}\right),
\end{align*}
as $k\le rn$.
Hence,
\[
\E(X^{(r)}_k)
= k\left(1 - \left(\frac{k}{rn}\right)^{r-1} \right) + O(1).
\]

 Since $f(x)=x - \frac{x^{r}}{(rn)^{r-1}}$ for $1\le x\le rn$ achieves its maximum at $x_0 = r^{\frac{r-2}{r-1}}n$ and $f(x_0)=\frac{(r-1)n}{r^{1/(r-1)}}$,
it follows that for each $k$,
\begin{equation}\label{boundX}
\E(X^{(r)}_k)\le f(k_0)=\frac{(r-1)n}{r^{1/(r-1)}} + O(1).
\end{equation}
where $k_0=\lceil r^{\frac{r-2}{r-1}}n \rceil$. Moreover, by the L-H-S of~\eqref{bounds},
\begin{align*}
\E(X^{(r)}_{k_0})
&\ge k_0\left(1 - \left(\frac{k_0}{rn}\right)^{r-1} \right)
\ge r^{\frac{r-2}{r-1}}n \left(1 - \left(\frac{r^{\frac{r-2}{r-1}}n+1}{rn}\right)^{r-1} \right)\\
&= r^{\frac{r-2}{r-1}}n \left(1 - \left(\frac{1}{r^{1/(r-1)}} + \frac{1}{rn}\right)^{r-1} \right)
= \frac{(r-1)n}{r^{1/(r-1)}} + O(1).
\end{align*}

Let
\begin{equation}\label{eq:rangek}
n^{4/5}\le k \le rn - n^{4/5}.
\end{equation}
Observe that for any $k$ outside the above range,
\[
X_k^{(r)}\le\min\{k,(rn-k)(r-1)\}\le (r-1)n^{4/5}
\]
We next show that for each $k$ satisfying \eqref{eq:rangek} the random variable $X^{(r)}_k$ is highly concentrated around its mean. Indeed, for  $k$ in this range, by the L-H-S of \eqref{bounds},
\[
\E X^{(r)}_k =k - \frac{r\binom{k}{r}}{\binom{rn-1}{r-1}}
\ge k\left(1 - \frac{k}{rn}  \right)
\ge n^{4/5} \left(1 - \frac{rn-n^{4/5}}{rn} \right)
= \frac{n^{3/5}}{r}.
\]
 Thus, Lemma~\ref{azuma}, applied to $h(\pi)=x_k(M_\pi)$ with $N=rn$, $c=2(r-1)$, and $\eta = \tfrac12C\sqrt{n\log n}$  implies that for each $k$ satisfying~\eqref{eq:rangek}
 \begin{equation}\label{Az}
 \PP\left( |X^{(r)}_k - \E(X^{(r)}_k)| \ge \frac12C\sqrt{n\log n}\right)\le2\exp\left\{-\frac{C^2\log n}{2c^2r} \right\}=o(n^{-1}),
 \end{equation}
 provided $C>2(r-1)\sqrt{2r}$.

 Let us set $R=\frac{(r-1)}{r^{1/(r-1)}}$ and keep notation $\eta=\tfrac12C\sqrt{n\log n}$ for convenience. Recalling that $\E(X^{(r)}_{k_0})\ge Rn+O(1)$, it follows that a.a.s.\
 \[
 Y^{(r)}=\max_kX^{(r)}_k\ge X^{(r)}_{k_0}\ge \E X^{(r)}_{k_0}-\eta\ge Rn-2\eta.
 \]
 On the other hand, by \eqref{Az}, \eqref{boundX}, and the union bound,

\[
\PP\left(Y>Rn+2\eta\right)\le\PP\left(\exists k: X^{(r)}_k>Rn+2\eta\right)\le\sum_k\PP\left( |X^{(r)}_k - \E(X^{(r)}_k)| \ge \eta\right)=o(1),
\]
where the last sum is over all $k$ in the range \eqref{eq:rangek}. This completes the proof of Theorem~\ref{thm2:general}.
\end{proof}

\section{Enumeration of matchings with a given sock number}\label{enu}
Let $\cM(n)$ be the set of all matchings of $[2n]$. For $M\in\cM(n)$ define the \emph{sock number} of $M$, $y(M):=\max_k x_k(M)$, to be the size of the largest bipartite sub-matching of $M$ (a particular instance of the random variable $Y$ defined earlier). For $1\le j\le n$, define
$$\cM_{\le j}(n)=\{M\in\cM(n): y(M)\le j\},$$
$$s_{\le j}(n)=|\cM_{\le j}(n)|\quad\mbox{and}\quad s_j(n)=s_{\le j}(n)-s_{\le j-1}(n),$$
where we assume that $s_{\le 0}(n)=0$.
 Thus, $s_{j}(n)$ counts the number of matchings of size $n$ with sock number exactly $j$.

It seems to be a very challenging problem to determine the values of $s_{j}(n)$. Here we just take the first step in this  direction by finding  numbers $s_{j}(n)$ for $j\le 2$ and $j\ge n-1$.
As a technical tool we introduce the \emph{Dyck trace} of $M$, that is a binary sequence $tr(M)=t_1\dots t_{2n}$, $t_i\in\{1,-1\}$, where $t_1=1$ if $i$ is the left end of an edge of $M$ and $t_i=-1$ if $i$ is  the right end of an edge of $M$. For instance, if $M=ABCCDBDA$, then $tr(M)=(1,1,1,-1,1,-1,-1,-1)$.
The name comes from \emph{Dyck sequences} which are binary sequences, with say $n$ 1's and $n$ $(-1)$'s, where each prefix has at least as many 1's as $(-1)$'s, equivalently, for each $1\le k\le2n$, we have $\sum_{i=1}^kt_i\ge0$.
Dyck sequences are enumerated by  the Catalan numbers $\frac1{n+1}\binom{2n}n$.

Clearly, several matchings may have the same Dyck trace. More precisely, define a \emph{run} in a sequence as a maximal block of identical elements. If the consecutive runs of 1's and -1's in a Dyck sequence have lengths $l_1,r_1,l_2,r_2,\dots$, then there are exactly $(l_1)_{r_1}\cdot(l_1+l_2-r_1)_{r_2}\cdot\cdots$ matchings with this Dyck trace, where $(l)_r=l(l-1)\cdots(l-r+1)$ is the falling factorial. (In the above
example, $l_1=3, r_1=1,l_2=1,r_2=3$, so there are $(3)_1\cdot(3)_3=3\cdot 3\cdot2\cdot1=18$ matchings with the same Dyck trace as $M$.)
Among them there is exactly one crossing-free and exactly one nesting-free matching which, by the way, proves that both these subfamilies of ordered matchings are enumerated by the Catalan numbers (see \cite{StanleyCatalan}).

Let us note that the sock number of $M$, $y(M)$, equals the \emph{height} of its trace $tr(M)$, that is, $\max_k\sum_{i=1}^kt_i$. Dyck sequences with height at most $j$ have been enumerated by a recurrence relation (see \url{https://oeis.org/A080934}) but no closed formula was found. It seems that determining $s_j(n)$ is not an easier task.

\begin{remark}
\normalfont
As  mentioned above, ordered matchings $M$ of $[2n]$ with no nestings are uniquely determined by their Dyck traces $tr(M)$, and the same is true for matchings with no crossings. Thus, the random matching $\rm(n)$ conditioned on containing no nestings, as well as the one conditioned on containing no crossings, is equivalent to a random Dyck sequence $\rd(n)$ of length $2n$. Thus, in these conditional spaces, the random variable $y(\rm(n))$ has the same distribution as the hight $h(\rd(n))$. As for the latter,
it can be shown by a standard application of Chernoff's inequality (see  Ineq. (2.9) together with Theorem 2.10 in \cite{JLR})  that a.a.s., $h(\rd(n))=O(\sqrt{n\log n})$, a significant drop from $\Theta(n)$ established in Theorem \ref{thm2:r=2} for unconditional $\rm(n)$.
\end{remark}

Trivially, $s_1(n)=1$, as $M_1=A_1A_1A_2A_2\cdots A_nA_n$ is the only matching with $y(M)=1$, and $s_n(n)=n!$, as $y(M)=n$ if and only if $M$ is bipartite, and so $M$ is determined by the order of the right endpoints of its edges. Our two final results provide formulae for $s_2(n)$ and $s_{n-1}(n)$, respectively.

\begin{prop}\label{k=2} We have  $s_{\le2}(1)=1$ and, for each $n\ge2$,
\[
s_{\le2}(n)=3s_{\le2}(n-1).
\]
Consequently, $s_{\le2}(n)=3^{n-1}$ and $s_2(n)=s_{\le2}(n)-s_1(n)=3^{n-1}-1$.
\end{prop}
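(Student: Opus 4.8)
The plan is to set up a bijection-style recurrence by analyzing how a matching $M$ with $y(M)\le 2$ on $[2n]$ relates to a smaller such matching on $[2n-2]$. Since $y(M)=\max_k\sum_{i=1}^k t_i$ is the height of the Dyck trace $tr(M)$, the condition $y(M)\le 2$ means that at every prefix at most $2$ socks are on the floor. I would first argue that $M$ must in fact be a \emph{Dyck} matching, i.e. $tr(M)$ is a Dyck sequence: if some prefix sum were negative we would have pulled a right-end before its partner, which is impossible for a matching. So $tr(M)$ is a Dyck path of height at most $2$, and by the run-counting formula quoted above each such trace corresponds to a fixed number of matchings.

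The key step is to look at the last vertex, position $2n$, which is always a right-end (so $t_{2n}=-1$), and at vertex $2n-1$. Because the height never exceeds $2$, I would classify matchings by the local pattern at the right end of the trace. The three contributing cases should correspond exactly to the factor $3$: informally, the last closing edge is either (i) a short edge $\{2n-1,2n\}$ sitting at height $1$ (pattern ending $\ldots 1,-1$ from height $0$), (ii) or the two final steps close a height-$2$ configuration in one of two admissible ways. More cleanly, I would peel off the pair of steps governing the final return to the baseline and show that deleting the appropriate two symbols from $tr(M)$ yields a valid height-$\le 2$ Dyck sequence on $[2n-2]$, while the forgotten data (which edge among the socks currently on the floor is being matched) contributes the multiplicity. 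The bookkeeping must reproduce the factor $3=1+2$: one way to extend by an alignment-free ``flat'' pair and two ways to extend through height $2$.

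Concretely, I would phrase this as: every $M\in\cM_{\le 2}(n)$ is obtained from a unique $M'\in\cM_{\le 2}(n-1)$ by inserting the $n$-th edge in exactly one of three ways, and conversely each of the three insertions keeps the height at most $2$. Verifying the converse, that all three insertions preserve $y\le 2$ and that they are exhausted (no fourth insertion is possible without creating height $3$ or leaving the Dyck regime), is the crux. With the recurrence $s_{\le2}(n)=3\,s_{\le2}(n-1)$ and base case $s_{\le2}(1)=1$ in hand, the closed form $s_{\le2}(n)=3^{\,n-1}$ follows immediately by induction, and then $s_2(n)=s_{\le2}(n)-s_1(n)=3^{\,n-1}-1$ using the already-noted value $s_1(n)=1$.

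The main obstacle I anticipate is making the ``three insertions'' map genuinely bijective rather than merely $3$-to-$1$ in a loose sense: I must be careful that the multiplicity coming from the falling-factorial run formula is correctly absorbed, and that different $(M',\text{insertion})$ pairs never produce the same $M$. An alternative, perhaps cleaner, route that avoids edge-insertion subtleties is to work directly with the traces: count height-$\le 2$ Dyck sequences of length $2n$ together with their matching multiplicities $(l_1)_{r_1}(l_1+l_2-r_1)_{r_2}\cdots$, and show this weighted count satisfies the same recurrence. I would try the insertion argument first and fall back on the weighted-trace computation if the bijection proves fiddly.
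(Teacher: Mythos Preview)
Your plan is sound and can be completed, but it takes a different route from the paper. The paper conditions on the position $t$ of the \emph{first} (leftmost) $11$-run in $tr(M)$: everything before that run is forced to be $(1,-1,\dots,1,-1)$, the $-1$ immediately following the run closes one of the two currently open socks (two choices), and what remains to the right is a fresh height-$\le 2$ matching of size $n-t$. This yields the summation $s_{\le 2}(n)=1+2\sum_{t\ge 1} s_{\le 2}(n-t)$, and the recurrence $s_{\le 2}(n)=3s_{\le 2}(n-1)$ drops out after subtracting the corresponding identity for $n-1$. Your right-end peeling reaches the recurrence in one step: if $t_{2n-1}=1$ then $\{2n-1,2n\}\in M$ and deleting it lands in $\cM_{\le 2}(n-1)$; if $t_{2n-1}=-1$ then the prefix sum at $2n-2$ equals $2$, which forces $t_{2n-2}=1$ (otherwise the height at $2n-3$ would be $3$), so $2n-2$ is a left-end whose partner is $2n-1$ or $2n$, and deleting that edge (relabelling the surviving element of $\{2n-1,2n\}$ down to $2n-2$) again lands in $\cM_{\le 2}(n-1)$. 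That is exactly your $1+2=3$, and the inverse map is evident, so the bijectivity worry you flag is not a real obstacle; nor do you need the falling-factorial run-multiplicity formula at all, since the argument operates directly on matchings rather than on traces. The paper's version trades this small relabelling step for an intermediate summation identity; yours is slightly more direct once the three cases are written down explicitly.
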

\begin{proof}
 Note that for every $M\in\cM_{\le 2}(n)$ other than $M_1$, the longest run in $tr(M)$ has length two.
Let $\cM_{\le2}(n,t)$ be the set of all matchings in $M\in\cM_{\le2}(n)$ for which in $tr(M)$  the first (from the left) $11$-run occurs at positions $2t-1,2t$. (Note that any such run has to begin at an odd position.)
 The digit $-1$ which must follow this $11$ in  $tr(M)$ may represent the right end of an edge whose left end is either $2t-1$ or $2t$. Once this is decided, we are looking at a fresh sub-matching of size $n-t$. Thus,
 \[
s_{\le2}(n)= |\cM_{\le 2}(n)|=1+\sum_{t=1}^n|\cM_{\le2}(n,t)|=1+2\sum_{t=1}^n|\cM_{\le2}(n-t)|,
 \]
and the stated recurrence follows by canceling all common terms in $s_{\le2}(n)-s_{\le2}(n-1)$.
 \end{proof}

\begin{prop}\label{k=n-1} For each $n\ge1$,
\[
s_{n-1}(n)=(n-1)^2(n-1)!.
\]
\end{prop}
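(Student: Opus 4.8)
The plan is to count matchings $M\in\cM(n)$ with $y(M)=n-1$ through their Dyck traces. Recall that $y(M)$ equals the height of $tr(M)$ and that, by the run formula recalled above, the number of matchings sharing a given trace $P$ equals the product, over all down-steps $s$ of $P$, of the height $h_s$ of $P$ immediately before $s$. Indeed, a down-run of length $r$ starting at height $H$ contributes the factor $(H)_r=H(H-1)\cdots(H-r+1)$, which is exactly the product of the pre-step heights $H,H-1,\dots,H-r+1$ encountered along that run. Thus
\[
s_{n-1}(n)=\sum_{P}\ \prod_{s} h_s,
\]
the outer sum ranging over all Dyck paths $P$ of semilength $n$ and height exactly $n-1$. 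Since a semilength-$n$ path attains height $n$ only for $P=1^n(-1)^n$, ``height exactly $n-1$'' means $P$ reaches level $n-1$ but never level $n$.

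The crux is to describe these paths. First I would look at the first step reaching level $n-1$: it is an up-step, and the prefix up to it has $u$ up-steps and $d$ down-steps with $u-d=n-1$. As only $n$ up-steps are available, $u\le n$ forces $d\in\{0,1\}$, which splits the paths into two explicit families. Family (a) consists of those with $d=0$: they begin with the run $1^{n-1}$ and then descend to $0$ via a suffix of one up-step and $n$ down-steps that never exceeds level $n-1$; such a suffix is determined by the position $m$ of its unique up-step, which must satisfy $2\le m\le n$. Family (b) consists of those with $d=1$: having used all $n$ up-steps by the time they first reach $n-1$, they are forced to descend monotonically afterwards, hence take the form $1^a(-1)1^{\,n-a}(-1)^{n-1}$ with $1\le a\le n-2$. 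The two families are disjoint (in (a) the initial up-run has length exactly $n-1$, in (b) it has length $a\le n-2$) and, by the first-passage argument, exhaust all paths of height $n-1$.

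Next I would evaluate the two weighted sums. For a family-(b) path the single interior down-step sits at height $a$ and the terminal run $(-1)^{n-1}$ contributes pre-heights $n-1,n-2,\dots,1$, so its weight is $a\,(n-1)!$; summing over $1\le a\le n-2$ gives $(n-1)!\,\tfrac{(n-1)(n-2)}{2}$. For a family-(a) path with up-step in position $m$, the down-steps before it have pre-heights $n-1,\dots,n-m+1$ and those after it have pre-heights $n-m+1,\dots,1$, so after telescoping the falling factorials its weight is $(n-1)!\,(n-m+1)$; summing over $2\le m\le n$ gives $(n-1)!\sum_{j=1}^{n-1}j=(n-1)!\,\tfrac{n(n-1)}{2}$. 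Adding the two contributions,
\[
s_{n-1}(n)=(n-1)!\left(\frac{n(n-1)}{2}+\frac{(n-1)(n-2)}{2}\right)=(n-1)!\,\frac{(n-1)(2n-2)}{2}=(n-1)^2(n-1)!.
\]

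The main obstacle is the characterization step: one must argue that the supply of only $n$ up-steps genuinely restricts the behaviour before first reaching level $n-1$ to at most one down-step, and then verify carefully that families (a) and (b) are both disjoint and exhaustive, including the degenerate ranges of $m$ and $a$ for small $n$ (e.g.\ family (b) is empty for $n\le 2$, consistent with $s_0(1)=0$ and $s_1(2)=1$). Once this structural description is secured, the remainder is a routine telescoping of falling factorials and a summation of arithmetic series.
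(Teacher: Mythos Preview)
Your argument is correct.  The reformulation of the run formula as ``product over down-steps of the pre-step height'' is valid, the first-passage dichotomy $d\in\{0,1\}$ is forced exactly as you say by $u\le n$ and $u-d=n-1$, the two families are disjoint and exhaustive, and the weight computations telescope as claimed.  The boundary cases $n\le 2$ also come out correctly.

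Your route, however, is genuinely different from the paper's.  The paper argues directly on matchings: a matching with $y(M)=n-1$ must have a unique edge $e\subset[n]$, and the two cases are split according to whether $e\subset[n-1]$ (count $\binom{n-1}{2}(n-1)!$) or $n\in e$ (count $(n-1)(n)_{n-2}$), with the remaining edges forced into an essentially bipartite pattern.  You instead translate everything into Dyck traces of height $n-1$, classify those paths, and then weight each path by the number of matchings with that trace.  In fact the two decompositions are dual: your family~(b) (the down-step occurs before all ups are spent) corresponds exactly to the paper's case $e\subset[n-1]$, while your family~(a) (prefix $1^{n-1}$, single late up-step) corresponds to the case $n\in e$.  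The paper's argument is shorter and avoids the path machinery, but its opening ``if and only if'' is stated a bit loosely (the condition ``exactly one edge in $[n]$'' alone is not sufficient; the subsequent case analysis silently adds the needed bipartite structure on the rest).  Your approach is longer but fully self-contained within the Dyck-trace framework already set up in the section, and it makes the characterisation step completely explicit.
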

\begin{proof}
We have  $y(M)=n-1$ if and only if there is exactly one edge  $e\in M$ with both ends in $[n]$, that is, $e\subset[n]$. If $e\subset[n-1]$, then the $n-1$ elements of $[n+1]\setminus e$ can be matched arbitrarily with those in $[2n]\setminus[n+1]$. So, there are $\binom{n-1}2\times (n-1)!$ such matchings $M$. If $e=\{i,n\}$ for some $i\in[n-1]$, then the $n-2$ vertices in $[n-1]\setminus\{i\}$ can be matched arbitrarily with some $n-1$ elements of $[2n]\setminus[n]$, leaving out two elements to form the final edge of $M$. So, in this case there are $(n-1)\times(n)_{n-2}$ such matchings $M$.
Summing up the two quantities yields the required formula.
\end{proof}

\begin{prob}
	Determine $s_k(n)$ (by finding a compact formula,  a recurrence, or a generating function)  for all values of $n$ and $k$.
\end{prob}
\noindent It seems that even the exact determination of $s_3(n)$ may be quite challenging.

\section{Final remarks}

We conclude the paper with a number of far-reaching generalization of Bosek's socks problem.  We begin with a couple of problems which lack the sequential aspect of Bosek's questions, but can be viewed as their Tur\'an-type counterparts. Recall the dual nature of the random variable $Y$ defined in the Introduction which on the one hand is the maximum value of a sockuence, while on the other hand, it is the size of the largest bipartite sub-matching, equivalently, the largest sub-matching not containing pattern $AABB$.

  Two words over a finite alphabet are deemed to be \emph{isomorphic} if they are the same up to a permutation of the letters.   For instance, the words $ABBA$ and $DAAD$ are isomorphic. (Note that for $r$-fold Gauss words this is equivalent to the order preserving isomorphism of the corresponding ordered $r$-matchings.)

Let $\mathcal F$ be a fixed set of forbidden ordered $r$-matchings (or its corresponding Gauss words). An ordered $r$-matching $M$ is \emph{$\mathcal F$-free} if $M$ does not contain a sub-matching (order) isomorphic to $F$ for any $F\in\mathcal F$. What is the maximum size of an $\mathcal F$-free sub-matching of $\rm^{(r)}(n)$? As mentioned above, for $r=2$, Bosek's original question corresponds to the case of $\mathcal F=\{AABB\}$. Let us denote by $ex_{\mathcal F}(M)$ the maximum number of edges in an $\mathcal F$-free sub-matching of~$M$.

\begin{prob}
	For $r\ge2$, let $\mathcal F$ be a set of forbidden ordered $r$-matchings. Estimate the expected value of $ex_{\mathcal F}(\rm^{(r)}(n))$.
\end{prob}

Next, consider a special case of the above problem in which all elements of $\mathcal F$ are patterns, that is, pairs of edges. Given $r\ge2$, there are exactly $\tfrac12\binom{2r}r$ ways, called \emph{$r$-patterns}, in which a pair of disjoint edges of order $r$ may intertwine as ordered vertex sets. In this restricted setting, it is equivalent, and perhaps more natural, to emphasize not the forbidden  patterns but the complementary set of those which are allowed to be present. For a set of $r$-patterns~$\mathcal P$, a \emph{$\mathcal P$-clique} is defined as an $r$-matching whose all pairs of edges form patterns $P$ belonging to $\mathcal P$. Let us denote by $z_{\mathcal P}(M)$ the maximum number of edges in a $\mathcal P$-clique contained in $M$.

\begin{prob}\label{free} For $r\ge2$, let $\mathcal P$ be a set of $r$-patterns. Estimate the expected value of  $z_{{\mathcal P}}(\rm^{(r)}(n))$.
\end{prob}

Recall that, for $r=2$, a bipartite matching is one without alignments $AABB$, and so the above problem in that case, that is, when $\mathcal P=\{ABAB,ABBA\}$ is solved by Theorem \ref{thm2:r=2}.
On the other hand, it can be easily shown by the first moment method that if one instead forbids nestings or crossings, that is, if $\mathcal P=\{AABB,ABBA\}$ or $\mathcal P=\{AABB,ABAB\}$, then the answer is just $O(\sqrt n)$ (and not $\Theta(n)$) -- the same when $|\mathcal P|=1$ (see \cite{DGR-match}).

 For larger $r$, we know a typical order of magnitude of $z_{{\mathcal P}}(\rm^{(r)}(n))$ when $\mathcal P=\{P\}$. (In such case we use simplified notation $z_P(\rm^{(r)}(n))$.) To state the result, we have to distinguish a special subfamily of patterns. Call an $r$-pattern $P$ \emph{collectable} if for each $k\ge1$ there exists a $\{P\}$-clique of size $k$. For $r\ge3$ not every $r$-pattern is collectable.
For instance, the 3-pattern  $P_0=AABABB$ is the unique not collectable 3-pattern (it fails already at $k=3$). In fact, as shown in \cite{JCTB_paper}, there are exactly $3^{r-1}$ collectable $r$-patterns.

By definition,  for every non-collectable pattern $P$, we have $z_P(\rm^{(r)}(n))=O(1)$.
On the other hand, it was proved in \cite{JCTB_paper} that for every collectable $r$-pattern $P$ a.a.s.\ $z_P(\rm^{(r)}(n))=\Theta(n^{1/r})$. (In \cite{AJKS} the constants in front of $n^{1/r}$ have been, in principle, determined.)

Let us further restrict to $r$-partite patterns and sub-matchings (i.e., those whose interval chromatic number is $r$). There are exactly $2^{r-1}$ $r$-partite $r$-patterns and they are all collectable. Let ${\mathcal Q}^{(r)}$ be the family of all of them. In particular,
$${\mathcal Q}^{(3)}=\{ABABAB, ABBAAB, ABABBA, ABBABA\}.$$

\begin{prob}\label{tri} Determine a.a.s.\  the largest size of an $r$-partite sub-matching of the random $r$-matching
$\rm^{(r)}(n)$, that is, determine $z_{{\mathcal Q}^{(r)}}(\rm^{(r)}(n))$.
Further, determine a.a.s.\ the order of magnitude of  $z_{\mathcal P}(\rm^{(r)}(n))$, when $\mathcal P\subset {\mathcal Q}^{(r)}$ is a subset of the set of $r$-partite patterns.
\end{prob}

We believe that $z_{{\mathcal Q}^{(r)}}(\rm^{(r)}(n))=\Theta(n)$ but would like to pinpoint the multiplicative constant -- as in Theorem~\ref{thm2:general}.
For $r=3$ we further conjecture that whenever $\mathcal P\subset\mathcal Q^{(3)}$ and $|\mathcal P|=2$, a.a.s.\  $z_{{\mathcal P}}(\rm^{(3)}(n))=\Theta(n^{1/2})$.

\medskip

Finally, let us pose a  very general problem which does reflect the sequential aspect of Bosek's question. Here we consider general words over finite alphabets, not just Gauss words.
 Suppose that $\mathcal J$ is a finite family of words (to be \emph{persecuted}). For a given word $U$, let $U/{\mathcal J}$ be the (scattered) sub-word of $U$ obtained by deleting successively (from left to right) one isomorphic copy of an element $J\in{\mathcal J}$ at a time. More precisely, we scan the letters one by one and once we see such a copy emerging, we remove all letters forming it. If  more than one copy of the elements in $\mathcal J$ has been revealed at that moment, we delete, say, the one which is farther to the left (destroying, in fact, all of them).
 
  For instance, if $\mathcal J=\{ABA\}$ and $U=ABCADBCDA$, then $U/{\mathcal J}=BDA$. Indeed, the first removal (of $ABA$, and not of $ACA$) takes place after four letters have been scanned and leaves the sub-word $CDBCDA$ which we continue to scan from the left; the second removal (of $CDC$, and not of $CBC$) leaves the final sub-word $BDA$.

  Let $y_{{\mathcal J}}(U)=\max_i|U_i/{\mathcal J}|$, where $U_i$ is the \emph{prefix} of $U$ of length $i$.
  (In the above example, $y_{{\mathcal J}}(U)=3$.) Notice that when $U$ is a Gauss word and $\mathcal J=\{AA\}$,  the number $y_{{\mathcal J}}(U)$ coincides with the sock number $y(M)$ of the corresponding ordered matching $M$.

Let $\rw_k(n)$ be any model of a random word of length $n$ over an alphabet of size $k$. (E.g., in \cite{DGR_annals}, two natural uniform models are defined).

  \begin{prob}
	For $k\ge2$  and a fixed persecuted family $\mathcal J$, estimate the expected value of $y_{{\mathcal J}}(\rw_k(n))$.
\end{prob}
\noindent  Note that Bosek's  problem for $r$-sets of socks asks for $y_{\mathcal J}(\rw_k(n))$, where $\mathcal J=\{A^r\}$,   $k=n$, $n:=rn$, and $\rw_k(n)$ is a random $r$-fold Gauss word of length $rn$.

\subsection*{Acknowledgements}
We would like to thank Bart\l{}omiej Bosek for sharing his ``real-life'' problem.

The first author was supported in part by Simons Foundation Grant \#522400. The second author was supported in part by Narodowe Centrum Nauki, grant 2020/37/B/ST1/03298.


\begin{bibdiv}
\begin{biblist}

\bib{AJKS}{unpublished}{
      author={Anastos, Michael},
      author={Jin, Zhihan},
      author={Kwan, Matthew},
      author={Sudakov, Benny},
       title={Extremal, enumerative and probabilistic results on ordered
  hypergraph matchings, manuscript},
        note={\href{https://arxiv.org/pdf/2308.12268.pdf}{arXiv:2308.12268}},
}

\bib{BaikRains}{article}{
      author={Baik, Jinho},
      author={Rains, Eric~M.},
       title={The asymptotics of monotone subsequences of involutions},
        date={2001},
        ISSN={0012-7094},
     journal={Duke Math. J.},
      volume={109},
      number={2},
       pages={205\ndash 281},
         url={https://doi.org/10.1215/S0012-7094-01-10921-6},
      review={\MR{1845180}},
}

\bib{Bosek}{misc}{
      author={Bosek, B.},
        note={Personal communication},
}

\bib{JCTB_paper}{unpublished}{
      author={Dudek, Andrzej},
      author={Grytczuk, Jaros{\l a}w},
      author={Ruci\'{n}ski, Andrzej},
       title={Erd{\H o}s-{S}zekeres type theorems for ordered uniform
  matchings},
        note={\href{https://arxiv.org/pdf/2301.02936.pdf}{arXiv:2301.02936}},
}

\bib{DGR_annals}{article}{
      author={Dudek, Andrzej},
      author={Grytczuk, Jaros{\l a}w},
      author={Ruci\'{n}ski, Andrzej},
       title={Long twins in random words},
        date={2023},
        ISSN={0218-0006,0219-3094},
     journal={Ann. Comb.},
      volume={27},
      number={3},
       pages={749\ndash 768},
         url={https://doi.org/10.1007/s00026-023-00651-5},
      review={\MR{4633764}},
}

\bib{DGR-match}{article}{
      author={Dudek, Andrzej},
      author={Grytczuk, Jaros{\l a}w},
      author={Ruci\'{n}ski, Andrzej},
       title={Ordered unavoidable sub-structures in matchings and random
  matchings},
        date={2024},
        ISSN={1077-8926},
     journal={Electron. J. Combin.},
      volume={31},
      number={2},
       pages={Paper No. 2.15},
         url={https://doi.org/10.37236/11932},
      review={\MR{4734453}},
}

\bib{FP}{incollection}{
      author={Frieze, Alan},
      author={Pittel, Boris},
       title={Perfect matchings in random graphs with prescribed minimal
  degree},
        date={2004},
   booktitle={Mathematics and computer science. {III}},
      series={Trends Math.},
   publisher={Birkh\"{a}user, Basel},
       pages={95\ndash 132},
      review={\MR{2090500}},
}

\bib{FJKMV}{article}{
      author={F\"{u}redi, Zolt\'{a}n},
      author={Jiang, Tao},
      author={Kostochka, Alexandr},
      author={Mubayi, Dhruv},
      author={Verstra\"{e}te, Jacques},
       title={Extremal problems for convex geometric hypergraphs and ordered
  hypergraphs},
        date={2021},
        ISSN={0008-414X},
     journal={Canad. J. Math.},
      volume={73},
      number={6},
       pages={1648\ndash 1666},
         url={https://doi.org/10.4153/S0008414X20000632},
      review={\MR{4350556}},
}

\bib{JLR}{book}{
      author={Janson, Svante},
      author={\L{}uczak, Tomasz},
      author={Ruci\'nski, Andrzej},
       title={Random graphs},
      series={Wiley-Interscience Series in Discrete Mathematics and
  Optimization},
   publisher={Wiley-Interscience, New York},
        date={2000},
        ISBN={0-471-17541-2},
         url={https://doi.org/10.1002/9781118032718},
      review={\MR{1782847}},
}

\bib{JSW}{article}{
      author={Justicz, Joyce},
      author={Scheinerman, Edward~R.},
      author={Winkler, Peter~M.},
       title={Random intervals},
        date={1990},
        ISSN={0002-9890,1930-0972},
     journal={Amer. Math. Monthly},
      volume={97},
      number={10},
       pages={881\ndash 889},
         url={https://doi.org/10.2307/2324324},
      review={\MR{1079974}},
}

\bib{McDiarmid98}{book}{
      author={McDiarmid, Colin},
      editor={Habib, M.},
      editor={McDiarmid, C.},
      editor={Ramirez-Alfonsin, J.},
      editor={Reed, B.},
       title={Probabilistic methods for algorithmic discrete mathematics},
      series={Algorithms and Combinatorics},
   publisher={Springer-Verlag, Berlin},
        date={1998},
      volume={16},
        ISBN={3-540-64622-1},
         url={https://doi.org/10.1007/978-3-662-12788-9},
      review={\MR{1678554}},
}

\bib{Scheinerman1988}{article}{
      author={Scheinerman, E.~R.},
       title={Random interval graphs},
        date={1988},
        ISSN={0209-9683},
     journal={Combinatorica},
      volume={8},
      number={4},
       pages={357\ndash 371},
         url={https://doi.org/10.1007/BF02189092},
      review={\MR{981893}},
}

\bib{StanleyCatalan}{book}{
      author={Stanley, Richard~P.},
       title={Catalan numbers},
   publisher={Cambridge University Press, New York},
        date={2015},
        ISBN={978-1-107-42774-7; 978-1-107-07509-2},
      review={\MR{3467982}},
}

\end{biblist}
\end{bibdiv}

\end{document}